\newtheorem{thm}{Theorem}
\newtheorem{ej}{Example}
\newtheorem{prop}{Proposition}
\newtheorem{defn}{Definition}
\newtheorem{obs}{Observation}
\def\C{{\mathbb C}}
\def\D{{\mathbb D}}
\begin{document}

\title {Preschwarzian derivative for logharmonic mappings}

\author{V. Bravo\and R. Hern\'andez \and O. Venegas }
\thanks{The
authors were partially supported by Fondecyt Grants \# 1190756.
\endgraf  {\sl Key words:} pre-Schwarzian derivative, logharmonic mappings, univalence criterion.
\endgraf {\sl 2010 AMS Subject Classification}. Primary: 30C55, 30G30;\,
Secondary: 31C05.}
\address{Departamento de Ciencias Matem\'{a}ticas y F\'{\i}sicas. Facultad de Ingenier\'{\i}a\\ Universidad Cat\'olica de
Temuco.} \email{ovenegas@uct.cl}
\address{Facultad de Ingenier\'ia y Ciencias\\
Universidad Adolfo Ib\'a\~nez\\
Av. Padre Hurtado 750, Vi\~na del Mar, Chile.}
\email{rodrigo.hernandez@uai.cl}
\address{Facultad de Ingenier\'ia y Ciencias\\
Universidad Adolfo Ib\'a\~nez\\
Av. Padre Hurtado 750, Vi\~na del Mar, Chile.}
\email{victor.bravo.g@uai.cl}
\address{}\email{}
\address{}\email{}

\begin{abstract}

We introduce a new definition of pre-Schwarzian derivative for logharmonic mappings and basic properties such as the chain rule, multiplicative invariance and affine invariance are proved for these operators. It is shown that the pre-Schwarzain is stable only with respect to rotations of the identity. A characterization is given for the case when the pre-Schwarzian derivative is holomorphic.
\end{abstract}

\maketitle

\section{Introduction}

Let $f$ be a locally univalent analytic function defined on a simply
connected domain $\Omega\subset \C$. The \emph{pre-Schwarzian
derivative} of $f$ is defined by
\begin{equation}\label{eq-def-preSch-analytic}
Pf=\frac{f''}{f'}\,.
\end{equation}

\par
It is well known that $P(A\circ f)=Pf$ for all linear (or affine)
transformations $A(w)=aw+b$, $a\neq 0$.
\par\smallskip
These are just particular cases of the \emph{chain rule} for the pre-Schwarzian derivative:
\begin{equation*}
P(g\circ f)= (Pg\circ f)f^\prime+Pf,
\end{equation*}
which hold whenever the composition $g\circ f$ is defined.
\par
The pre-Schwarzian derivative of locally
univalent analytic mappings $f$ is a widely used tool in the study
of geometric properties of such functions. For instance, it can be
used to get either necessary or sufficient conditions for the global
univalence, or to obtain certain geometric conditions on the range
of $f$. More specifically, it is well known that any
\emph{univalent} analytic transformation $f$ in the unit disk $\D$
satisfies the sharp inequality
\[
|Pf(z)|\leq \frac{6}{1-|z|^2}\,, \quad z\in\D\,.
\]
Also, one of the most used univalence criterion for locally univalent holomorphic mappings was showed by J. Becker in  \cite{B72}, which asserts that
if $f$ is locally univalent (and analytic) in $\D$ and
\[
\sup_{z\in\D} |zPf(z)|\, (1-|z|^2)\leq 1\,,
\]
then $f$ is univalent in the unit disk. Becker and Pommerenke \cite{BP} proved later that the constant $1$ is sharp.
\par\smallskip

Moreover, L. Ahlfors in \cite{Ah} gave a generalization of these results, this criterion can be formulated as: Let $f : \D \rightarrow \C$ be a holomorphic function with $f'\neq 0$. Let $c \in \C$ with $|c| < 1$, $c\neq -1$, and assume that
 $$\left|z\dfrac{f''}{f'}(z)(1-|z|^2)+c|z|^2\right|\leq1,\quad\quad \forall z\in\D,$$  then $f$ is univalent in $\D$.

For more properties and results related to the pre-Schwarzian derivative of locally univalent analytic mappings, we
refer the reader to the monographs \cite{Dur-Univ}, \cite{GK}, or
\cite{P}.
\par\medskip

More generally, the definition of a pre-Schwarzian derivative for harmonic mapping on the complex plane has been extended, obtaining results such as the Becker's criterion, among others. We recommend the reader to review the paper \cite{RH-MJ}. Recently I. Efraimidis et al. in \cite{EFHV}, they provided a definition of this operator in the context of several complex variables and for functions called pluriharmonic. However, for functions called \textit{logharmonic} we have not found a reference for the pre-Schwarzian derivative, even though the Schwarzian derivative has been defined for this type of functions (see \cite{MPW13}), making an extension of the definition given by Chuaqui et al. in \cite{CHDO} for harmonic mappings.


\par

The main object of this work is to give a coherent definition for the pre-Schwarzian derivative, $ P_f $ defined by equation (\ref{eq-def-preSch-analytic}), of logharmonic functions $ f $ defined in a simply connected domain. As in the case of harmonic mapping, we will justify this definition in two different ways, but which are closely related. The first consists in considering that the pre-Schwarzian derivative is the derivative of the Jacobian Logarithm of $ f $, while the second refers to the approximation of $ f $ by means of \textit{affine logharmonic} mapping in the same way that Tamanoi does for the Schwarzian derivative, see \cite{T}.
\par


We prove that not only the classical properties of the pre-Schwarzian derivative (chain rule, characterization of affine logharmonic mappings, etc.) are held for this type of functions, but also that we determine that $ P_ {A \circ f} = P_f $ for certain affine logarmonic mappings and we characterize the solutions of the equation $ P_f = P_F $.
\par

In section 2 we shall give a definition of the pre-Schwarzian derivative, $P_f$, for locally univalent logharmonic mappings $f=h\overline{g}$ defined in any simple connected domain. Moreover, we proved the chain rule, characterize the logharmonic mappings with $P_f=0$, and we show that $P_f(z_0)=P(h^ag^b)(z_0)$ for some especial choose of the complex numbers $a$ and $b$, which depends of $z_0$ (this is the same case in the harmonic scenario, see \cite{RH-MJ}). In section 3 we gave the complementary derivation of $P_f$ using the best \textit{affine logharmonic} mappings approximation for a given logharmonic mapping $f$ in the same way to the Tamanoi's work in \cite{T}.
\par

Finally, we are able to generalize the univalence criteria of Becker and Ahlfors for logharmonic mappings, in the same way of the generalization of this criteria for the case when $f$ is sense-preserving harmonic mappings. The reader can find these references in \cite{VBRHOV1} and \cite{RH-MJ}.

\par\medskip
\subsection{Logharmonic mappings} A logharmonic mapping defined in the unit disc is a solution of the nonlinear elliptic partial differential equation \begin{equation}\label{def-log}\overline{f_{\overline{z}}(z)}=\omega(z)\left(\dfrac{\overline{f(z)}}{f(z)}\right)f_z(z),\end{equation} where $\omega$ is the second complex dilatation of $f$ and $|\omega(z)|<1$ for all $z\in \D$. Thus, the Jacobian $J_f$ of $f$ is given by
\begin{equation}\label{jacobiano}J_f=|f_z|^2-|f_{\overline{z}}|^2=|f_z|^2(1-|\omega|^2),
\end{equation} which is positive, and therefore, every non-constant logharmonic mapping is sense-preserving and open in $\D.$ If $f$ is a non-constant logharmonic mapping defined in $\D$ and vanishes only at the origin, then $f$ has the representation \begin{equation*}\label{representation-Log} f(z)=z^m|z|^{2\beta m}h(z)\overline{g(z)},\end{equation*} where $m$ is a nonnegative integer, Re$\{\beta\}>-1/2$, and $h$ and $g$ are analytic mappings in the unit disc, such that $g(0)=1$ and $h(0)\neq0$ (see \cite{AB88}). In particular, this class of logharmonic mappings with $m=1$ has been widely studied in the last years. For more details the reader can read the summary paper in \cite{AR}. If $f$ is a nonvanishing mapping in $\D$, then $f$ can be expressed as
\begin{equation*} f(z) = h(z)\overline{g(z)},\end{equation*}
where $h(z)$ and $g(z)$ are non-vanishing analytic functions in $\D.$ But, $f$ is locally univalent mapping, then the second complex dilatation $\omega$ is a Schwarz's function given by \begin{equation}\label{omega}\omega=\dfrac{g'h}{gh'}.\end{equation} The logharmonic mappings has been largely studied in the past, for instance we can found the classical results of this topic in \cite{AB88,MPW13}.
\par
In this paper we shall consider the logharmonic mappings defined by equation (6) in the unit disc and such that $f=h\overline{g}:\D\to\C$ where $h$ is an analytic locally univalent mapping and $g$ not vanish in $\D$, which occur when $f$ is locally univalent. These two previous examples satisfies this type of conditions.

\section{Pre-Schwarzian derivative for non vanishing sense-preserving logharmonic mappings}

\subsection{Previous definition of pre-Schwarzian derivative} For locally univalent holomorphic mappings $f:\C\to\C$ the definition of the pre-Schwarzian $P_f$ is given by $$P_f(z)=\dfrac{f''}{f'}(z)=\dfrac{\partial}{\partial z}\log(J_f)=\dfrac{\partial}{\partial z}\log|f'(z)|^2.$$ Thus, $P_f=0$ if and only if $f$ is an affine mapping and the classical univalence Becker's criterion is proved in \cite{B72} which assert that $|P_f(z)|(1-|z|^2)\leq 1$ then $f$ is univalent in the unit disc. Moreover, when $f$ is a locally sense-preserving harmonic mapping, in which case $f=h+\overline{g}$ where the second complex dilatation is given by $\omega=g'/h'$ and the Jacobian is $J_f=|h'|
^2-|g'|^2=|h'|^2(1-|\omega|^2)$, the pre-Schwarzian derivative is defined as $$P_f=\dfrac{h''}{h'}-\dfrac{\omega'\overline{\omega}}{1-|\omega|^2}=\dfrac{\partial}{\partial z}\log(J_f)=\dfrac{\partial}{\partial z}\log(|h'|^2(1-|\omega|^2)).$$ For instance, when $P_f=0$ then $\omega$ is a constant and $h(z)=az+b$, moreover, in \cite{RH-MJ} the authors proved an extension of univalence Becker's criterion in the context of harmonic mappings. In the same way, if $P_f$ is an harmonic mapping, the only option is that the mappings will be holomorphic, in which case, $\omega$ is a constant and $P_h=P_f$.

\subsection{Definition of the $P_f$ for logharmonic mappings}
As we mentioned before, we shall consider a locally univalent logharmonic mapping $f$ defined in the unit disc and that can be written as $f=h\overline g$ where $h$ and $g$ are analytic functions in $\D$. Using equation (\ref{def-log}) and (\ref{jacobiano}), $f$ is locally univalent then $h$ locally univalent and $g$ does not vanishes in the unit disc and $\omega(\D)\subset\D$.
\begin{defn} Let $f=h\overline g$ be a locally univalent logharmonic mapping in a simple connected domain. We define the pre-Schwarzian derivative of $f$ by:
\begin{equation}\label{pf} P_f=\dfrac{h''}{h'}+\omega \dfrac{h'}{h}-\dfrac{\omega'\overline\omega}{1-|\omega|^2}\,.
\end{equation}
\end{defn}
Note that as in the analytic and harmonic case, the Preschwarzian derivative of $f$ satisfies that $P_f=\partial\log J_f/\partial z$. Moreover, when $\omega=0$ this definition coincides with the analytical one.

\begin{ej} Consider $f(z)=z^a\overline{z^b}$ for some complex number $a$ and $b$, and $z$ in any simple connected domain of $\C$ where the origin is not included. Now, a straightforward calculation gives that the dilatation is a constant, this is, $\omega=b/a$. For our purpose we consider $b/a\in\D$, in which case this function is a sense-preserving logharmonic mapping. Hence, \begin{equation*}\label{ej} P_f(z)=\dfrac{a+b-1}{z}.\end{equation*} Thus $P_f=0$ if and only if $a+b=1$ which holds when Re$\{a\}\geq 1/2$.
\end{ej}

\noindent\textbf{Chain Rule:} Let $f=h\overline{g}$ be a sense-preserving logharmonic mapping in a simply connected domain $\Omega\subset\C$ where $h$ and $g$ are non vanishing. It is well-known that if $\varphi$ is a locally univalent analytic function for which the composition $f\circ\varphi$ is defined, then the function $f\circ \varphi$ is again a sense-preserving logharmonic mapping. A straightforward calculation shows that \begin{equation*} P_{f\circ \varphi}=P_f(\varphi)\varphi'+P\varphi.\end{equation*}

\begin{prop} Let $f=h\overline g$ be a sense-preserving locally univalent logharmonic mapping defined in a simple connected domain with dilatation $\omega$ and such that $P_f=0$, then $$h(z)=(az+b)^k,\quad \mbox{and}\quad \omega(z)=\dfrac{1-k}{k},$$ where $a$, $b$, and $k$ are complex numbers with $\mbox{Re}\{k\}>1/2$.
\end{prop}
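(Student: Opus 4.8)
The plan is to exploit that, among the three summands in the defining formula~(\ref{pf}), the first two are holomorphic while the third is not, unless $\omega$ is extremely special. Indeed $h$ and $g$ are analytic, and $\omega=g'h/(gh')$ is a holomorphic Schwarz function, so $h''/h'$ and $\omega\,h'/h$ are holomorphic. Hence $P_f=0$ is equivalent to the identity
$$\frac{h''}{h'}+\omega\,\frac{h'}{h}=\frac{\omega'\overline{\omega}}{1-|\omega|^2},$$
whose left-hand side is annihilated by $\partial_{\bar z}$.

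First I would apply $\partial_{\bar z}$ to both sides. Since $\omega'$ is holomorphic, the right side differentiates to $\omega'\,\partial_{\bar z}\!\big(\overline{\omega}/(1-|\omega|^2)\big)$, and a short computation using $\partial_{\bar z}\overline{\omega}=\overline{\omega'}$ and $\partial_{\bar z}(1-|\omega|^2)=-\omega\overline{\omega'}$ yields
$$\partial_{\bar z}\!\left(\frac{\omega'\overline{\omega}}{1-|\omega|^2}\right)=\frac{|\omega'|^2}{(1-|\omega|^2)^2}.$$
Equating this with $0$ forces $\omega'\equiv 0$, so $\omega$ is a constant $c$ with $|c|<1$. (Equivalently: since $P_f=\partial_z\log J_f$, the relation $P_f=0$ makes $\log(1-|\omega|^2)$ have vanishing $\partial_{\bar z}\partial_z$ derivative, which again equals $-|\omega'|^2/(1-|\omega|^2)^2$.) I expect this passage — observing that holomorphy of the apparently non-holomorphic term is only possible for constant $\omega$ — to be the main obstacle; everything afterwards is routine ODE integration.

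With $\omega\equiv c$, the identity collapses to $h''/h'+c\,h'/h=0$, i.e. $\big(\log(h'h^{c})\big)'=0$, so $h'h^{c}$ is a constant, nonzero by local univalence; note that $h$ is single-valued and nonvanishing on the simply connected domain because $f=h\overline g$ does not vanish there. Since $|c|<1$ we have $c\neq-1$, so $(h^{c+1})'=(c+1)h^{c}h'$ is a nonzero constant, whence $h^{c+1}(z)=az+b$ for some $a\neq 0$ and $b$. Setting $k=1/(c+1)$ gives $h(z)=(az+b)^{k}$ and $c=(1-k)/k$, as claimed. Finally, $|c|<1$ reads $|1-k|^{2}<|k|^{2}$, i.e. $1<k+\overline{k}=2\,\mbox{Re}\{k\}$, which is exactly $\mbox{Re}\{k\}>1/2$; this condition also guarantees that $f$ is genuinely a sense-preserving logharmonic mapping. (One can also recover $g(z)=\text{const}\cdot(az+b)^{1-k}$ from $g'/g=\omega\,h'/h$, though this is not needed for the statement.)
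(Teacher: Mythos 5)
Your proof is correct and follows essentially the same route as the paper: differentiate $P_f=0$ with respect to $\bar z$ to get $|\omega'|^2/(1-|\omega|^2)^2=0$, hence $\omega$ constant, then integrate $h''/h'+\omega h'/h=0$ to obtain $h'h^{\omega}$ constant and $h=(az+b)^k$. Your explicit verification that $|c|<1$ is equivalent to $\mathrm{Re}\{k\}>1/2$ is a small but welcome addition that the paper only states implicitly.
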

\begin{proof} As $P_f=0$ we have that $$P_f=\dfrac{h''}{h'}+\omega \dfrac{h'}{h}-\dfrac{\omega'\overline\omega}{1-|\omega|^2}=0\,,$$ therefore, differentiating with respect to $\overline z$ we conclude that $$\dfrac{|\omega'|^2}{(1-|\omega|^2)^2}=0,$$ hence $\omega$ is a complex number in the unit disc, named $(1-k)/k$. As $\mbox{Re}\{k\}>1/2$ it follows that $(1-k)/k$ belongs to the unit disc. Thus, \begin{equation}\label{prop1}\dfrac{h''}{h'}+\left(\dfrac{1-k}{k}\right)\dfrac{h'}{h}=0\end{equation} or equivalently, $$\dfrac{\partial}{\partial z}\log(h'h^{\frac{1-k}{k}})=0.$$ We can conclude that $h'h^{(1-k)/k}$ is a constant, which implies that  $h^{1/k}$ is lineal, i.e. $$h(z)=(az+b)^k.$$ \end{proof}
\begin{obs}
On the other hand, $g'/g=\omega h'/h$ then $g=ch^{1/k-1}$ for some complex number $c$. Therefore $$g(z)=c(az+b)^{1-k}.$$
\end{obs}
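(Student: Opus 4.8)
The plan is to start from the dilatation identity recorded in equation (\ref{omega}): for a nonvanishing locally univalent logharmonic mapping $f=h\overline g$ one has $\omega = g'h/(gh')$, which rearranges to $g'/g = \omega\, h'/h$. Thus the logarithmic derivatives of $g$ and $h$ are proportional, with proportionality factor the dilatation $\omega$.

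Next I would invoke Proposition~1, which under the standing hypothesis $P_f=0$ gives that $\omega$ is the constant $(1-k)/k$ (with $\mathrm{Re}\{k\}>1/2$) and that $h(z)=(az+b)^k$. Substituting this value of $\omega$ into the relation above yields $g'/g = \frac{1-k}{k}\, h'/h$, that is,
$$\frac{\partial}{\partial z}\log\!\bigl(g\, h^{-(1-k)/k}\bigr)=0,$$
where we fix single-valued branches of $\log h$ and $\log g$, which is legitimate since $h$ and $g$ are nonvanishing analytic functions on a simply connected domain. Hence $g\, h^{-(1-k)/k}$ is a constant, say $c$, so $g = c\, h^{(1-k)/k}$; since $1/k-1=(1-k)/k$ this is exactly the intermediate assertion $g=c\,h^{1/k-1}$.

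Finally I would simplify $h^{(1-k)/k}$ using the conclusion of Proposition~1. The statement that "$h^{1/k}$ is linear" in that proof means precisely $h^{1/k}=az+b$; consistency of the chosen determinations of the logarithm then gives $h^{(1-k)/k}=\bigl(h^{1/k}\bigr)^{1-k}=(az+b)^{1-k}$, whence $g(z)=c(az+b)^{1-k}$, as claimed.

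The only delicate point — more a matter of bookkeeping than a genuine obstacle — is to make sure the fractional powers $h^{1/k}$, $h^{(1-k)/k}$ used in Proposition~1 and here are taken with compatible branches of the logarithm; this is guaranteed by simple connectivity together with $h$ and $g$ being nonvanishing. Once that is settled the argument is a single integration.
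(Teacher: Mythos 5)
Your argument is correct and is exactly the one the paper intends: integrate $g'/g=\omega\,h'/h$ with the constant dilatation $\omega=(1-k)/k$ and the form $h(z)=(az+b)^k$ obtained in Proposition~1, yielding $g=c\,h^{1/k-1}=c(az+b)^{1-k}$. Your added care about choosing compatible branches of the logarithms (legitimate since $h,g$ are nonvanishing on a simply connected domain) is a harmless refinement of the paper's sketch, not a different route.
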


\begin{prop} Let $f=h\overline{g}$ be a sense-preserving locally univalent logharmonic mapping defined in simple connected domain and $\varphi$ an holomorphic mapping. Then $Pf=P\varphi$ if and only if $$h(z)=\left(c\varphi+d\right)^k,\quad  \mbox{and} \quad \omega=\dfrac{1-k}{k},$$ for some constants $c$, $d$, and $k$, where $\mbox{Re}\{k\}>1/2$.
\end{prop}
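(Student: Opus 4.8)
The plan is to follow the template of Propositions 1 and 2: first force $\omega$ to be constant, then integrate twice to recover $h$ from $\varphi$, and finally check the converse by a direct substitution. For the necessity, suppose $P_f=P\varphi$. Since $\varphi$ is holomorphic, $P\varphi=\varphi''/\varphi'$ is holomorphic, hence so is $P_f$. Because $h$ is analytic, the terms $h''/h'$ and $\omega h'/h$ in (\ref{pf}) are holomorphic, so applying $\partial/\partial\overline z$ to the identity $P_f=P\varphi$ and using exactly the computation from the proof of Proposition 1, we obtain $|\omega'|^2/(1-|\omega|^2)^2=0$ on $\D$; thus $\omega$ is a constant in the unit disc (sense-preservation gives $|\omega|<1$). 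Set $k=1/(1+\omega)$, equivalently $\omega=(1-k)/k$; the elementary equivalence $|(1-k)/k|<1\Longleftrightarrow\mbox{Re}\{k\}>1/2$ was already noted in Proposition 1.

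With $\omega$ constant the identity $P_f=P\varphi$ becomes $\dfrac{h''}{h'}+\omega\dfrac{h'}{h}=\dfrac{\varphi''}{\varphi'}$, that is, $\dfrac{\partial}{\partial z}\log\!\bigl(h'h^{\omega}/\varphi'\bigr)=0$, where $h^{\omega}=e^{\omega\log h}$ is single-valued since $h$ is non-vanishing on the simply connected domain. As $h'h^{\omega}/\varphi'$ is holomorphic with vanishing $z$-derivative, it is a nonzero constant, so $h'h^{\omega}=c_0\,\varphi'$. Since $\omega+1=1/k\neq0$ we may write $h'h^{\omega}=\frac1{\omega+1}\bigl(h^{\omega+1}\bigr)'$, and integrating once more gives $h^{\omega+1}=c\varphi+d$ for constants $c\neq0$ and $d$. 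Because $\omega+1=1/k$, this reads $h=(c\varphi+d)^{k}$; the dilatation relation $g'/g=\omega h'/h$ then forces $g=c'(c\varphi+d)^{1-k}$, consistent with the Observation following Proposition 2.

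For the converse, assume $h=(c\varphi+d)^{k}$ and $\omega=(1-k)/k$ (constant). A direct computation yields $h''/h'=(k-1)c\varphi'/(c\varphi+d)+\varphi''/\varphi'$ and $h'/h=kc\varphi'/(c\varphi+d)$, so $\omega h'/h=(1-k)c\varphi'/(c\varphi+d)$; the first two terms of (\ref{pf}) telescope to $\varphi''/\varphi'$, and $\omega'=0$ annihilates the last term, giving $P_f=\varphi''/\varphi'=P\varphi$. One should also record that, on any domain where $\varphi'\neq0$ and $c\varphi+d$ does not vanish, the mapping $f=h\overline g$ with $g=c'(c\varphi+d)^{1-k}$ is a genuine sense-preserving locally univalent logharmonic mapping, so the statement is not vacuous.

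I expect the only delicate point to be bookkeeping rather than substance: choosing consistent branches for the complex powers $h^{\omega}$ and $(c\varphi+d)^{k}$ on the simply connected domain (which is exactly why the non-vanishing of $h$ and of $c\varphi+d$ must be invoked) and tracking the two constants of integration. Everything else reduces to the same $\partial/\partial\overline z$ identity already used for Propositions 1 and 2.
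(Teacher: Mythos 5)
Your proof is correct and follows essentially the same route as the paper: differentiating $P_f=P\varphi$ with respect to $\overline z$ to force $\omega$ to be the constant $(1-k)/k$, then integrating $h'h^{\omega}=c_0\varphi'$ once more to get $h^{1/k}=c\varphi+d$. The only additions are welcome ones the paper leaves implicit, namely the direct verification of the converse direction of the ``if and only if'' and the branch-choice remarks for the complex powers.
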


\begin{proof} Since $P_f=P\varphi$, differentiating the equation (\ref{pf}) with respect to $\overline z$, as in the previous proposition, we have that $\omega$ is a complex number, named $(1-k)/k$, thus using equation (\ref{prop1}), we have that $$\dfrac{\partial}{\partial z}\log(h'h^{\frac{1-k}{k}})=\dfrac{\varphi''}{\varphi'}=\dfrac{\partial}{\partial z}\log(\varphi'),$$ therefore $h'h^{(1-k)/k}=a\varphi'$ for some $a\in\C$. Integrating one more time, we obtain that $$h^{1/k}=\dfrac{a}{k}\varphi+\frac{b}{k},$$ for some $b\in\C$, from where the results it follows.
\end{proof}

\begin{obs}
On the other hand, $g'/g=\omega h'/h$ then $g=ah^{1/k-1}$ for some complex number $a$. Therefore $$g(z)=(m\varphi+n)^{1-k},$$ for some complex numbers $m$ and $n$.
\end{obs}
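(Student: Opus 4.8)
The plan is to combine the dilatation identity (\ref{omega}) with the two conclusions already obtained in the preceding proposition, namely that the dilatation is the constant $\omega=(1-k)/k$ and that $h=(c\varphi+d)^k$. First I would rewrite (\ref{omega}) as a relation between logarithmic derivatives. Since $\omega=g'h/(gh')$, clearing denominators and dividing by $g$ gives
\[
\frac{g'}{g}=\omega\,\frac{h'}{h}\,,
\]
and because $\omega=(1-k)/k=1/k-1$ is now a genuine constant, the right-hand side is simply a constant multiple of $h'/h$.

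Next I would integrate this separable relation. Writing the equation as $\frac{\partial}{\partial z}\log g=(1/k-1)\frac{\partial}{\partial z}\log h$ and integrating in $z$ yields $\log g=(1/k-1)\log h+\text{const}$, hence $g=a\,h^{1/k-1}$ for some constant $a\in\C$. Since $f=h\overline g$ is a non-vanishing logharmonic mapping, $g$ does not vanish, so $a\neq0$; this is exactly the first claim of the Observation.

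For the second claim I would substitute the explicit form $h=(c\varphi+d)^k$ into $g=a\,h^{1/k-1}$. A direct computation of the exponent gives
\[
h^{1/k-1}=(c\varphi+d)^{k(1/k-1)}=(c\varphi+d)^{1-k}\,,
\]
so that $g=a\,(c\varphi+d)^{1-k}$. Finally, assuming $k\neq1$ (the case $k=1$ being the analytic one, for which $\omega=0$ and $g$ is constant), I would absorb the multiplicative constant into the base by choosing $\alpha\in\C$ with $\alpha^{1-k}=a$ --- possible precisely because $a\neq0$ and $1-k\neq0$ --- and setting $m=\alpha c$, $n=\alpha d$. Then $g=(\alpha c\,\varphi+\alpha d)^{1-k}=(m\varphi+n)^{1-k}$, as stated.

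The step I expect to require the most care is the last one: the manipulation of the fractional powers of the holomorphic functions $h$ and $c\varphi+d$ is only meaningful after fixing single-valued branches on the simply connected domain, and the absorption $a=\alpha^{1-k}$ presupposes that the degenerate exponent $1-k=0$ has been excluded. Everything else is a routine integration of a first-order logarithmic-derivative identity.
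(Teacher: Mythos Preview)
Your proof is correct and follows exactly the route the paper intends: use the constant dilatation $\omega=(1-k)/k$ in $g'/g=\omega\,h'/h$, integrate to $g=a\,h^{1/k-1}$, then substitute $h=(c\varphi+d)^k$ and absorb the constant into the base. The paper states the observation without spelling out these steps, so your write-up, including the remark on branches and the degenerate case $k=1$, is a faithful (and slightly more careful) completion of what the authors leave implicit.
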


\subsection{Best affine approximation}

Let $f=h\overline g$ be a sense-preserving logharmonic mapping defined in a simple connected domain that contains the origin. We construct a logharmonic mapping $T$ defined as follows:  $T(z)=L(z)\overline{L(z)^{1-k/k}}$ where $L(z)=(az+b)^{k}$ and such that
\begin{equation}\label{normalization}
  T(0)=f(0),\quad \dfrac{\partial T}{\partial z}(0)=\dfrac{\partial f}{\partial z}(0),\quad \mbox{and} \quad\dfrac{\partial T}{\partial \overline z}(0)=\dfrac{\partial f}{\partial \overline z}(0).
\end{equation} Thus, $\log T=\log L+c\overline{\log L}$ where $c=\overline{(1-k)/k}$ is a sense-preserving harmonic mapping. It is also an affine harmonic mapping. Since $T$ is an univalent logharmonic mapping, and the best affine logharmonic mapping approximation of $f$, we define $F=T^{-1}\circ f$, and the normalization given in (\ref{normalization}) we have that \begin{equation}\label{cond-inicial}F(0)=0,\quad \dfrac{\partial F}{\partial z}(0)=1,\quad \dfrac{\partial F}{\partial \overline z}(0)=0.\end{equation} By the definition we can compute that
\begin{eqnarray}
F_z & = & \left(\dfrac{\overline{T_z}}{J_T}\right)(F)f_z-\dfrac{T_{\overline{z}}}{J_T}(F)\left(\overline{f}\right)_z\,, \label{e1}\\
F_{\overline{z}} & = & \left(\dfrac{\overline{T_z}}{J_T}\right)(F)f_{\overline{z}}-\dfrac{T_{\overline{z}}}{J_T}(F)\left(\overline{f}\right)_{\overline{z}}\,.\label{e2}
\end{eqnarray} Differentiating $(\ref{e1})$ and $(\ref{e2})$  with respect to $z$ and $\overline z$ and evaluating at the origin, from equations in (\ref{cond-inicial}) it follows that: \begin{equation}
F_{zz}(0)=\dfrac{(f_{zz}(0)-T_{zz}(0))\overline{f_z(0)}-\overline{(f_{\overline z\, \overline z}(0)-T_{\overline z\,\overline z}(0))}f_{\overline{z}}(0)}{J_f(0)}.\label{e3}
\end{equation}

\begin{equation}
F_{z\overline z}(0)=\dfrac{(f_{z\overline z}(0)-T_{z\overline z}(0))\overline{f_z(0)}-\overline{(f_{z\overline z}(0)-T_{z\overline z}(0))}f_{\overline{z}}(0)}{J_f(0)}.\label{e4}
\end{equation}

\begin{equation}
F_{\overline z\,\overline z}(0)=\dfrac{(f_{\overline z\,\overline z}(0)-T_{\overline z\,\overline z}(0))\overline{f_z(0)}-\overline{(f_{zz}(0)-T_{zz}(0))}f_{\overline{z}}(0)}{J_f(0)}.\label{e5}
\end{equation} By the definition of $T$ we have that \begin{equation}T_{zz}(0)=-\omega(0)\frac{f_z(0)^2}{f(0)},\quad T_{z\overline z}(0)=\frac{f_z(0)f_{\overline z}(0)}{f(0)},\quad T_{\overline z\,\overline z}(0)=-\frac{\overline{f_z(0)}f_{\overline z}(0)}{\overline{f(0)}}.\label{e6}\end{equation} Using equations (\ref{e3}), (\ref{e4}),(\ref{e5}), and (\ref{e6}), a straightforward calculation gives that the Taylor expansion of $F$ in terms of $z$ and $\overline z$ is $$F(z)=z+P_f(0)z^2+ \dfrac{e^{i\theta}\overline{\omega'(0)}}{1-|\omega(0)|
^2}\overline z^2+\cdots,$$ where $e^{i\theta}=h(0)\overline{h'(0)}/h'(0)\overline{h(0)}$ and $\partial P_f/\partial \overline z=|\omega'|^2/(1-|\omega|^2)^2$.

\begin{thm}Let $f=h\overline g$ be a nonvanishing sense-preserving logharmonic mapping with dilatation $\omega$. For any $z_0\in D$ we have that$$P_f(z_0)=P(h^ag^b)(z_0),$$ where $a=\dfrac{1+\omega(z_0)}{1-|\omega(z_0)|^2}$ and  $b=-\overline{\omega(z_0)}\left[\dfrac{1+\omega(z_0)}{1-|\omega(z_0)|^2}\right]$.
\end{thm}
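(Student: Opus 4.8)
The plan is to reduce the identity to the classical pre-Schwarzian of the analytic function $\phi:=h^ag^b$ and then verify it by a direct differentiation. First I would fix $z_0\in\D$ and abbreviate $\omega_0=\omega(z_0)$, so that $a=\dfrac{1+\omega_0}{1-|\omega_0|^2}$ and $b=-\overline{\omega_0}\,a$ are \emph{constants}; I record the two elementary identities $b=-\overline{\omega_0}\,a$ and $a(1-|\omega_0|^2)=1+\omega_0$ that will do all the work at the end. Since $f$ is nonvanishing, $h$ and $g$ are non-vanishing analytic functions on a simply connected domain, so one may select analytic branches of $\log h$ and $\log g$ and set $\phi=h^ag^b:=e^{a\log h+b\log g}$; this is a well-defined analytic function, and its pre-Schwarzian $P\phi=\phi''/\phi'=(\log\phi')'$ does not depend on the chosen branches (a change of branch multiplies $\phi$ by a constant).

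Next I would compute the logarithmic derivative $\phi'/\phi=a\,h'/h+b\,g'/g$ and insert the dilatation relation (\ref{omega}), namely $g'/g=\omega\,h'/h$, to obtain
\[
\frac{\phi'}{\phi}=(a+b\omega)\frac{h'}{h}=a\bigl(1-\overline{\omega_0}\,\omega\bigr)\frac{h'}{h}.
\]
This expression is non-vanishing on the whole domain: $a\neq0$ because $|\omega_0|<1$ forces $1+\omega_0\neq0$; $h'/h\neq0$ because $h$ is locally univalent and non-vanishing; and $1-\overline{\omega_0}\,\omega\neq0$ because $|\overline{\omega_0}\,\omega|<1$. Hence $\phi$ is locally univalent and the classical formula for $P\phi$ genuinely applies.

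Then, writing $\log\phi'=\log\phi+\log(\phi'/\phi)$ and differentiating, I get $P\phi=\dfrac{\phi'}{\phi}+\dfrac{(\phi'/\phi)'}{\phi'/\phi}$. Taking the logarithmic derivative of the factored form above, the factor $a(1-\overline{\omega_0}\,\omega)$ contributes $\dfrac{-\overline{\omega_0}\,\omega'}{1-\overline{\omega_0}\,\omega}$, while $h'/h$ contributes $\dfrac{(h'/h)'}{h'/h}=\dfrac{h''}{h'}-\dfrac{h'}{h}$. Collecting terms,
\[
P\phi=a\bigl(1-\overline{\omega_0}\,\omega\bigr)\frac{h'}{h}+\frac{h''}{h'}-\frac{h'}{h}-\frac{\overline{\omega_0}\,\omega'}{1-\overline{\omega_0}\,\omega}.
\]
Finally I would evaluate at $z=z_0$, where $1-\overline{\omega_0}\,\omega(z_0)=1-|\omega_0|^2$ and therefore $a(1-|\omega_0|^2)=1+\omega_0$; the first and third terms then combine to $\omega_0\,h'/h(z_0)$, and comparison with Definition (\ref{pf}) gives exactly $P\phi(z_0)=\dfrac{h''}{h'}(z_0)+\omega_0\dfrac{h'}{h}(z_0)-\dfrac{\omega'(z_0)\overline{\omega_0}}{1-|\omega_0|^2}=P_f(z_0)$.

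There is no genuine obstacle here; the statement is an identity, and the computation is short once one substitutes $g'/g=\omega h'/h$. The only points that deserve an explicit word are the well-definedness and branch-independence of $h^ag^b$, and the non-vanishing of $\phi'$ (so that $P\phi$ is legitimately defined), both of which are immediate from $|\omega|<1$ and the local univalence of $h$.
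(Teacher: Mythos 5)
Your proposal is correct and follows essentially the same route as the paper: a direct computation of $P(h^ag^b)$ using $g'/g=\omega h'/h$ together with the identities $b=-\overline{\omega(z_0)}\,a$ and $a(1-|\omega(z_0)|^2)=1+\omega(z_0)$, evaluated at $z_0$. The only difference is organizational (you factor $\phi'/\phi=a(1-\overline{\omega_0}\omega)h'/h$ before log-differentiating, while the paper keeps the $g'/g$ terms and shows they cancel), plus your welcome but inessential remarks on branch choice and non-vanishing of $\phi'$.
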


\begin{proof} Since $h$ and $g$ are nonvanishing mappings, then $\varphi=h^ag^b$ is a nonvanishing analytic function such that
\begin{eqnarray*}
\varphi' & = & ah^{a-1}h'g^b+bh^ag^{b-1}g'\\
& = & ah^{a-1}h'g^b\left[1+\dfrac{b}{a}\dfrac{hg'}{h'g}\right]\\
& = &  ah^{a-1}h'g^b\left[1+\dfrac{b}{a}\omega\right],\\
\end{eqnarray*}
then
\begin{eqnarray*}
\dfrac{\varphi''}{\varphi'} (z_0)& = & (a-1)\dfrac{h'}{h}(z_0)+(b-1)\dfrac{g'}{g}(z_0)+ \dfrac{h''}{h'}(z_0)+\dfrac{g'}{g}(z_0)+\dfrac{\dfrac{b}{a}\omega'(z_0)}{1+\dfrac{b}{a}\omega(z_0)}\\\\
& = & (a-1)\dfrac{h'}{h}(z_0)+(b-1)\dfrac{g'}{g}(z_0)+ \dfrac{h''}{h'}(z_0)+\dfrac{g'}{g}(z_0)-\dfrac{\overline{\omega(z_0)}\omega'(z_0)}{1-|\omega(z_0)|^2}\\\\
& = & (a-1)\dfrac{h'}{h}(z_0)+(b-1)\dfrac{g'}{g}(z_0)+P_f(z_0).\\
\end{eqnarray*} As $a=\dfrac{1+\omega(z_0)}{1-|\omega(z_0)|^2},$ so $a(1-|\omega(z_0)|^2)=1+\omega(z_0)$ and from this $ (a-1)\dfrac{h'}{h}+(b-1)\dfrac{g'}{g}=0,$ therefore $$P(\varphi)(z_0)=P_f(z_0).$$\\
\end{proof}

\begin{prop} Let $f=h\overline{g}$ be a sense-preserving logharmonic mapping defined in a simple connected domain, and $L(z)=z^a\overline{z^b}$, $a\neq0$. Then $P_f=P_{L\circ f}$ when $a+b=1$.
\end{prop}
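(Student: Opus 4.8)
The strategy is to reduce the statement to the identity $P_\psi=\partial\log J_\psi/\partial z$, valid for every locally univalent sense-preserving logharmonic mapping $\psi$ (the identity noted right after the definition of $P_f$), together with the multiplicativity of Jacobian determinants under composition. Writing $L$ as a function of a variable $w$, and regarding $L$ and $f$ as real-differentiable maps of the plane, the real differential of $L\circ f$ is the composition of the real differentials, so taking determinants yields $J_{L\circ f}(z)=J_L(f(z))\,J_f(z)$; since $f$ is non-vanishing, $J_L\circ f$ is well defined here. Hence
\begin{equation*}
P_{L\circ f}=\frac{\partial}{\partial z}\log J_{L\circ f}=\frac{\partial}{\partial z}\log\bigl(J_L\circ f\bigr)+\frac{\partial}{\partial z}\log J_f=\frac{\partial}{\partial z}\log\bigl(J_L\circ f\bigr)+P_f ,
\end{equation*}
so the whole proposition amounts to showing that $J_L$ is constant on the range of $f$ exactly when $a+b=1$.

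The next step is a direct computation of $J_L$ for $L(w)=w^a\overline{w^b}$. On the relevant simply connected domain write $\log L=a\log w+\bar b\,\overline{\log w}$; since $\overline{\log w}$ is anti-holomorphic in $w$ one obtains $L_w=aL/w$ and $L_{\overline w}=\bar b\,L/\overline w$, whence
\begin{equation*}
J_L=|L_w|^2-|L_{\overline w}|^2=\bigl(|a|^2-|b|^2\bigr)\frac{|L|^2}{|w|^2}.
\end{equation*}
A short calculation gives $|L|^2=e^{2\,\mathrm{Re}((a+b)\log w)}$, which equals $|w|^2$ precisely when $a+b=1$; thus, under that hypothesis, $J_L\equiv|a|^2-|b|^2$. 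In the case relevant here $L$ is, like $f$, sense-preserving, i.e.\ its dilatation $b/a$ lies in $\D$, so this constant is positive; then $J_{L\circ f}=(|a|^2-|b|^2)J_f>0$, which shows that $L\circ f$ is again a locally univalent sense-preserving logharmonic mapping --- indeed $L\circ f=H\overline G$ with $H=h^{a}g^{\bar b}$ and $G=h^{b}g^{\bar a}$ analytic and non-vanishing --- so that $P_{L\circ f}$ is genuinely given by formula (\ref{pf}).

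Putting these together, $\log J_{L\circ f}=\log(|a|^2-|b|^2)+\log J_f$, hence $\partial\log(J_L\circ f)/\partial z=0$, and the displayed reduction gives $P_{L\circ f}=P_f$. I expect the computational heart --- the collapse of $J_L$ to a constant exactly under $a+b=1$ --- to be immediate; the only point needing care is the routine bookkeeping with the branches of the powers defining $L$ and the verification that $L\circ f$ belongs to the class for which $P_\psi=\partial\log J_\psi/\partial z$ was established. As an alternative one could substitute $H=h^{a}g^{\bar b}$, $G=h^{b}g^{\bar a}$ and the resulting dilatation $\tilde\omega=(b+\bar a\,\omega)/(a+\bar b\,\omega)$ directly into (\ref{pf}) and simplify with $a+b=1$, but that is considerably more laborious than the Jacobian computation.
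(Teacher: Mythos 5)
Your argument is correct, and it takes a genuinely different route from the paper. The paper proves the proposition by brute force within formula (\ref{pf}): it writes $L\circ f=H\overline G$ with $H=h^ag^{\overline b}$, $G=g^{\overline a}h^b$, computes the new dilatation $W=(\overline a\omega+b)/(a+\overline b\omega)$, then $H''/H'$, $G'/G$ and the $W$-term, and finds the exact identity $P_{L\circ f}=(a+b-1)\frac{h'}{h}+\overline{(a+b-1)}\frac{g'}{g}+P_f$, from which the claim follows. You instead exploit the observation, stated right after the definition, that $P_\psi=\partial_z\log J_\psi$, together with multiplicativity of Jacobians, reducing everything to the one-line computation $J_L=(|a|^2-|b|^2)|L|^2/|w|^2$, which is constant precisely because $|L(w)|=|w|$ when $a+b=1$. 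This is shorter and explains conceptually why $a+b=1$ is the right condition; it also recovers the paper's difference formula if you differentiate $\log(J_L\circ f)$ in the general case. You are right to flag the two points that need checking: that $L\circ f$ is again of the class for which $P_\psi=\partial_z\log J_\psi$ applies (your factorization $H=h^ag^{\overline b}$, $G=h^bg^{\overline a}$ plus $J_{L\circ f}=(|a|^2-|b|^2)J_f>0$ settles this), and the branch issue, which is harmless since $h$ and $g$ are non-vanishing on a simply connected domain so $f^a\overline{f^b}$ is defined directly through $\log f=\log h+\overline{\log g}$, and $J_L$ is branch-independent. Note that you (like the paper, implicitly) need $|b|<|a|$, i.e.\ $\mathrm{Re}\,a>1/2$ when $a+b=1$, so that $L$ and hence $L\circ f$ are sense-preserving; this is not stated in the proposition but is required for $P_{L\circ f}$ to be meaningful. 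What the paper's computational proof buys in exchange for its length is the explicit decomposition $H\overline G$ and the dilatation formula for $W$, which is what is actually invoked later (in Theorem 2) to normalize $\omega(0)=0$; your aside records the same formula, so nothing essential is lost.
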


\begin{proof} Let $F=L\circ f$, then $
F = (h\overline{g})^a\overline{(h\overline{g})^b}=h^ag^{\overline{b}}\,\overline{g^{\overline{a}}h^b}= H\overline{G}$, where $H$ and $G$ are nonvanishing analytic mappings, and its dilatation is given by $$ W  =  \dfrac{G'H}{GH'}=\dfrac{[\overline{a}g^{\overline{a}-1}g'h^b+bh^{b-1}h'g^{\overline{a}}]h^ag^{\overline{b}}}{[ah^{a-1}h'g^{\overline{b}}+h^a\overline{b}g^{\overline{b}-1}g']g^{\overline{a}}h^b} =\dfrac{\overline{a}}{a}\left[\dfrac{g'h+\varepsilon h'g}{h'g+\overline{\varepsilon} hg'}\right]=\dfrac{\overline{a}}{a}\left[\dfrac{\omega+\varepsilon}{1+\overline{\varepsilon}\omega}\right], $$ where $\varepsilon=b/\overline a$. Note that $$H'  =  ah^{a-1}h'g^{\overline{b}}+\overline{b} h^ag^{\overline{b}-1}g'=ah^{a-1}g^{\overline{b}-1}h'g\left[1+\dfrac{\overline{b}}{a}\cdot\dfrac{hg'}{h'g}\right]= ah^{a-1}g^{\overline{b}-1}h'g\left[1+\varepsilon\omega\right].$$ Therefore $$\dfrac{H''}{H'}=(a-1)\dfrac{h'}{h}+(\overline{b}-1)\dfrac{g'}{g}+\dfrac{h''}{h'}+\dfrac{g'}{g}+\dfrac{\overline{\varepsilon}\omega'}{1+\overline{\varepsilon}\omega}.$$ On the other hand $G=g^{\overline{a}}h^b$ since $\dfrac{G'}{G}=\overline{a}\dfrac{g'}{g}+b\dfrac{h'}{h},$ then
\begin{eqnarray*}
P_{F} &=& \dfrac{H''}{H'}+\dfrac{G'}{G}-\dfrac{W'\overline{W}}{1-|W|^2}\\
& = & (a-1)\dfrac{h'}{h}+(\overline{b}-1)\dfrac{g'}{g}+\dfrac{h''}{h'}+\dfrac{g'}{g}+\dfrac{\overline{\varepsilon}\omega'}{1+\overline{\varepsilon}\omega}+\overline{a}\dfrac{g'}{g}+b\dfrac{h'}{h}-\dfrac{W'\overline{W}}{1-|W|^2}\\
& = & (a+b-1)\dfrac{h'}{h}+(\overline{a}+\overline{b}-1)\dfrac{g'}{g}+\dfrac{\overline{\varepsilon}\omega'}{1+\overline{\varepsilon}\omega}-\dfrac{\omega'(\overline{\omega+\varepsilon})}{(1+\overline{\varepsilon}\omega)(1-|\omega|^2)}+\dfrac{h''}{h'}+\dfrac{g'}{g}\\
& = & (a+b-1)\dfrac{h'}{h}+(\overline{a}+\overline{b}-1)\dfrac{g'}{g}-\dfrac{\omega'\overline{\omega}}{1-|\omega|^2}+\dfrac{h''}{h'}+\dfrac{g'}{g}\\
& = & (a+b-1)\dfrac{h'}{h}+(\overline{a+b-1})\dfrac{g'}{g}+P_f.
\end{eqnarray*}
This concludes that $P_F=P_f$ when $a+b=1$.
\end{proof}

\begin{thm} Let $f=h\overline g$ and $F=H\overline{G}$ be non vanishing logharmonic sense-preserving mappings with dilatations $\omega_f$ and $\omega_F$ respectively. If $P_f=P_F$ then $\omega_F=\lambda\omega_f$ for some complex number $\lambda$, with $|\lambda|=1$.
\end{thm}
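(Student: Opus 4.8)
The plan is to split the pre-Schwarzian into its holomorphic and non-holomorphic parts and to squeeze a rigidity statement for the dilatations out of the hypothesis $P_f=P_F$. First I would differentiate it with respect to $\overline z$. In (\ref{pf}) the summands $h''/h'$ and $g'/g=\omega h'/h$ are holomorphic, so the only surviving contribution comes from $-\omega'\overline\omega/(1-|\omega|^2)$, and, as already computed in the text, $\partial P_f/\partial\overline z=|\omega'|^2/(1-|\omega|^2)^2$. Hence $P_f=P_F$ forces
\[
\frac{|\omega_f'(z)|}{1-|\omega_f(z)|^2}=\frac{|\omega_F'(z)|}{1-|\omega_F(z)|^2},\qquad z\in\D ,
\]
that is, $\omega_f$ and $\omega_F$ pull the Poincar\'e metric $|dw|/(1-|w|^2)$ of $\D$ back to one and the same conformal metric on $\D$.

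If $\omega_f$ is constant this identity makes $\omega_F$ constant as well, a borderline case to be disposed of directly. Otherwise $\omega_f'$ vanishes only on a discrete set, on whose complement $\omega_f$ and $\omega_F$ are holomorphic immersions pulling the Poincar\'e metric back to the same conformal metric; fixing a point $z_0$ with $\omega_f'(z_0)\neq0$ (so $\omega_F'(z_0)\neq0$ by the identity above), the germ $\omega_F\circ\omega_f^{-1}$ at $\omega_f(z_0)$ is a local isometry of the hyperbolic disk and hence, by the rigidity of a constant-curvature metric (uniqueness of the developing map), the germ of some $M\in\mathrm{Aut}(\D)$. Thus $\omega_F=M\circ\omega_f$ near $z_0$, and then on all of $\D$ by the identity theorem.

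It remains to identify $M$. I would substitute $\omega_F=M\circ\omega_f$ into the full equation $P_f=P_F$: writing $M(w)=e^{i\theta}(w-\alpha)/(1-\overline\alpha w)$ and using $|M'(w)|/(1-|M(w)|^2)=1/(1-|w|^2)$, the non-holomorphic term $\omega_F'\overline{\omega_F}/(1-|\omega_F|^2)$ becomes an explicit function of $\omega_f$, $\alpha$ and $\theta$; matching it against the holomorphic remainder $\partial_z\log\bigl((h'g)/(H'G)\bigr)$ should force $\alpha=0$, i.e. $M(w)=e^{i\theta}w$, giving the assertion with $\lambda=e^{i\theta}$.

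The main obstacle is precisely this last step. The identity above by itself is not enough: composition with $L(z)=z^a\overline{z^b}$, $a+b=1$, leaves $P$ unchanged while carrying $\omega_f$ to the non-trivial M\"obius image $(\overline a/a)(\omega_f+b/\overline a)/(1+(b/a)\omega_f)$ of itself (the earlier proposition on $P_{L\circ f}$). So a successful argument must use the rigid coupling between the logarithmic derivatives $h'/h,\,H'/H$ and the dilatations --- via $g'/g=\omega_f h'/h$ and $G'/G=\omega_F H'/H$ --- to exclude the Blaschke factor of $M$; I expect the bulk of the computation, and the point where an extra hypothesis (say, non-constancy of $\omega_f$, or a normalization of $f$ and $F$) might be needed, to sit there.
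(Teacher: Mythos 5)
Your proposal reproduces the paper's first step (differentiating $P_f=P_F$ in $\overline z$ to get $|\omega_f'|/(1-|\omega_f|^2)=|\omega_F'|/(1-|\omega_F|^2)$), and your rigidity argument giving $\omega_F=M\circ\omega_f$ for some $M\in\mathrm{Aut}(\D)$ is a clean substitute for what the paper does by hand. But as a proof of the theorem as stated it has a genuine gap, which you yourself flag: you never show $M$ is a rotation, and the final ``matching'' computation you sketch cannot succeed in general. Moreover the constant case cannot be ``disposed of directly'': take $\omega_f\equiv 0$ (so $f=h$ analytic, nonvanishing) and $F=L\circ f$ with $L(z)=z^a\overline{z^b}$, $a+b=1$, $0<|b|<|a|$; by Proposition 3, $P_F=P_f$, yet $\omega_F\equiv b/a\neq 0$, so the asserted conclusion $\omega_F=\lambda\omega_f$ fails outright there, and a non-constant version of the same example ($\omega_F=(\overline a/a)(\omega_f+\varepsilon)/(1+\overline\varepsilon\,\omega_f)$ with $\varepsilon=b/\overline a\neq0$) shows that $P_f=P_F$ alone only ever yields your M\"obius-orbit conclusion.

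For comparison, the paper closes this gap differently from what you attempt: it invokes Proposition 3 to ``assume'' $\omega_f(0)=\omega_F(0)=0$ (and tacitly $\omega_f'(0)\neq0$), then differentiates the metric identity repeatedly at the origin and inducts on Taylor coefficients --- which in your language is exactly the observation that $M(0)=0$ forces $M$ to be a rotation by Schwarz's lemma. The weak point is precisely that normalization: composing with $L$ changes each dilatation by a nontrivial disk automorphism, so the conclusion for the normalized pair does not transfer back to $\omega_F=\lambda\omega_f$ for the original pair, as the example above shows. So your suspicion in the last paragraph is well founded: the step you left open is not a missing trick but reflects the fact that the statement needs the normalization (or some equivalent extra hypothesis) that the paper smuggles in; what your argument honestly establishes is the invariant form of the result, $\omega_F=M\circ\omega_f$ with $M\in\mathrm{Aut}(\D)$, plus the rotation conclusion under the additional assumption $\omega_f(0)=\omega_F(0)=0$ with $\omega_f$ non-constant.
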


\begin{proof} Since $P_f=P_F$,  differentiating with respect to $\overline{z}$ we can obtain that \begin{eqnarray}
\dfrac{|\omega'_f|^2}{(1-|\omega_f|^2)^2}=\dfrac{|\omega'_F|^2}{(1-|\omega_F|^2)^2}.\label{eq2}
\end{eqnarray} By the last proposition, we can suppose that $\omega_f(0)=\omega_F(0)=0$. Thus, evaluating this equation at the origin, it follows that  $|\omega_f'(0)|=|\omega_F'(0)|$ and assuming that this quantity is different of zero, we have that $\omega_f'(0)=\lambda\omega_F'(0)$ for some $\lambda$ in the unit circle. Now, differentiating the equation (\ref{eq2}) with respect to $z$ and evaluating at the origin, we have that $$\omega_f''(0)\overline{\omega_f'(0)}=\omega_F''(0)\overline{\omega_F'(0)}=\omega_F''(0)\overline{\lambda\omega_f'(0)},$$ therefore $\omega_f''(0)=\overline \lambda\omega_F''(0)$. Using induction we can show that $$\frac{\partial^n \omega_f}{\partial z^n}(0)=\overline{\lambda}\frac{\partial^n \omega_F}{\partial z^n}(0),$$ hence $\omega_f=\overline\lambda\omega_F$.
\end{proof}

\begin{obs} Since the definition of the pre-Schwarzian, we have that if $P_f=P_F$ therefore $$\dfrac{\partial}{\partial z}\log(h'g)=\dfrac{\partial}{\partial z}\log(H'G),$$ thus, there exists a complex number $c$ such that $h'g=cH'G$ and, hence $\omega_f=\lambda\omega_F$ for some $\lambda$ in the unit circle, we have that $hg'=\overline \lambda c HG'$.
\end{obs}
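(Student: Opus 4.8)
The plan is to rewrite $P_f$ so that its holomorphic part is cleanly separated from the part that depends on $\overline z$. Starting from the definition (\ref{pf}) and using the identity $\omega\,h'/h=g'/g$ (which is immediate from $\omega=g'h/(gh')$ in (\ref{omega})), I would record
\[
P_f=\frac{h''}{h'}+\frac{g'}{g}-\frac{\omega_f'\overline{\omega_f}}{1-|\omega_f|^2}=\frac{\partial}{\partial z}\log(h'g)-\frac{\omega_f'\overline{\omega_f}}{1-|\omega_f|^2},
\]
and likewise $P_F=\frac{\partial}{\partial z}\log(H'G)-\omega_F'\overline{\omega_F}/(1-|\omega_F|^2)$. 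In each case the first summand is a holomorphic function of $z$, while the second summand carries all of the $\overline z$-dependence.

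The key step is to show that the two non-holomorphic terms actually coincide, so that the hypothesis $P_f=P_F$ forces the holomorphic parts to agree. For this I would invoke the preceding theorem, which tells us that $P_f=P_F$ yields $\omega_F=\lambda\omega_f$ with $|\lambda|=1$ (equivalently $\omega_f=\overline\lambda\,\omega_F$). Because $|\lambda|=1$ we get $|\omega_F|=|\omega_f|$, and $\omega_F'\overline{\omega_F}=\lambda\overline\lambda\,\omega_f'\overline{\omega_f}=\omega_f'\overline{\omega_f}$, so that
\[
\frac{\omega_F'\overline{\omega_F}}{1-|\omega_F|^2}=\frac{\omega_f'\overline{\omega_f}}{1-|\omega_f|^2}.
\]
Subtracting the two expressions for $P_f$ and $P_F$ and using $P_f=P_F$ then leaves exactly the asserted equality of holomorphic parts,
\[
\frac{\partial}{\partial z}\log(h'g)=\frac{\partial}{\partial z}\log(H'G).
\]

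To finish I would integrate. Since $h$ is locally univalent and $g,H',G$ do not vanish on the simply connected domain, $\log\bigl(h'g/(H'G)\bigr)$ is a well-defined holomorphic function whose $z$-derivative vanishes by the previous display; hence it is constant, producing a nonzero constant $c$ with $h'g=cH'G$. For the final relation I would rearrange the dilatation formula (\ref{omega}) into $hg'=\omega_f\,(h'g)$ and $HG'=\omega_F\,(H'G)$, and then combine these with $\omega_f=\overline\lambda\,\omega_F$ and $h'g=cH'G$:
\[
hg'=\omega_f(h'g)=\overline\lambda\,\omega_F\,cH'G=\overline\lambda\,c\,(\omega_F H'G)=\overline\lambda\,c\,HG'.
\]

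I expect the main obstacle to be conceptual rather than computational. The equality of holomorphic parts is \emph{not} a direct consequence of $P_f=P_F$ alone, since a priori one is comparing a holomorphic function with one that genuinely depends on $\overline z$; the argument works only because the two anti-holomorphic terms cancel, and that cancellation is precisely the content of the preceding theorem ($\omega_F=\lambda\omega_f$, $|\lambda|=1$). Thus the logical dependence on that theorem must be made explicit, and one must carry the conjugate carefully through $\omega_f=\overline\lambda\,\omega_F$ so as to land on $\overline\lambda$ (and not $\lambda$) in the final identity $hg'=\overline\lambda\,cHG'$.
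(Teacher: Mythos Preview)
Your argument is correct and follows the same line as the paper's own (very terse) justification embedded in the observation: rewrite $P_f$ as $\partial_z\log(h'g)-\omega_f'\overline{\omega_f}/(1-|\omega_f|^2)$, use the preceding theorem to make the non-holomorphic pieces cancel, integrate to get $h'g=cH'G$, and then multiply by the dilatation to obtain $hg'=\overline\lambda\,cHG'$. Your version is simply more explicit about why the cancellation of the non-holomorphic terms requires $\omega_F=\lambda\omega_f$ with $|\lambda|=1$, which the paper leaves implicit.
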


\section{Univalence criteria}

In the context of analytic and harmonic functions defined in the unit disc or in any simple connected domain, the concept of \textit{stability} was introduced by Hern\'andez and Mart\'in in \cite{rhmje} but this concept appears in a natural way in the celebrated paper by Clunie and Sheil-Small \cite{CSS}. In fact, the authors defined that $h+g$ is a stable analytic univalent function if $h+\lambda g$ is univalent for all $|\lambda|=1$, and they proved that this is equivalent with $h+\lambda \overline{g}$ is univalent for all $|\lambda|=1$. Thus, $h+g$ is a stable analytic univalent mapping if and only if $h+\overline{g}$ is a stable harmonic univalent mapping. The family of univalent mappings can be replaced by the Convex or Starlike mappings in the definition of stability. The reader can find all those details in \cite{rhmje}.

\begin{thm} Let $f=h\overline g:\D\to\C$ be a nonvanishing logharmonic sense-preserving mapping with dilatation $\omega$. If \begin{equation}\label{univ}\left|zP_f(z)\right|+(1+|\omega|)\left|z\dfrac{h'}{h}(z)\right|+\dfrac{|z\omega'(z)|}{1-|\omega(z)|^2}\leq\dfrac{1}{1-|z|^2},\end{equation} then $f$ is univalent. The constant 1 is sharp.
\end{thm}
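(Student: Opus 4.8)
The plan is to adapt Becker's Loewner-chain argument, mirroring the construction carried out for sense-preserving harmonic mappings in \cite{VBRHOV1, RH-MJ}. I would attach to $f=h\overline g$ a logharmonic Loewner chain $\{f(\cdot,t)\}_{t\ge 0}$ with $f(\cdot,0)=f$, obtained by applying Becker's spreading to the analytic factor while keeping the logharmonic structure: put $\phi=\log h$ and
$$h(z,t)=\exp\!\Big(\phi(e^{-t}z)+(e^t-e^{-t})\,z\,\phi'(e^{-t}z)\Big),$$
so that $h(\cdot,t)$ is automatically non-vanishing, and choose $g(z,t)$ (analytic, non-vanishing) so that the dilatation of $f(z,t)=h(z,t)\overline{g(z,t)}$ equals $\omega(e^{-t}z)$ and $f(\cdot,0)=f$. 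One checks the usual normalization ($h(0,t)=h(0)$, $h_z(0,t)=e^t h'(0)$) and a normal-family condition, and then the subordination principle appropriate to this class --- the logharmonic analogue of the one used in \cite{VBRHOV1} --- gives that $f=f(\cdot,0)$ is univalent as soon as the Herglotz functions associated with the chain have positive real part.

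The heart of the matter is that positivity, and it is where (\ref{univ}) is consumed. Writing $\zeta=e^{-t}z$, a computation parallel to Becker's shows that the local univalence of the analytic factor amounts to $1+(1-e^{-2t})\,\zeta\big(\tfrac{h''}{h'}-\tfrac{h'}{h}\big)(\zeta)\ne 0$, while the Herglotz functions of the chain are M\"obius images $\tfrac{1-(1-e^{-2t})v(\zeta)}{1+(1-e^{-2t})v(\zeta)}$ with $v$ running over $\zeta P_f(\zeta)$ and over the combinations, coming from the dilatation and the second factor, of $\zeta P_f(\zeta)$, $\zeta(h'/h)(\zeta)$, $\zeta(g'/g)(\zeta)$ and $\zeta\omega'(\zeta)/(1-|\omega(\zeta)|^2)$. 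The point is the single algebraic observation that the left-hand side $\Delta(z)$ of (\ref{univ}) dominates all of these: indeed, from $P_f=\tfrac{h''}{h'}+\omega\tfrac{h'}{h}-\tfrac{\omega'\overline\omega}{1-|\omega|^2}$ one gets $\tfrac{h''}{h'}-\tfrac{h'}{h}=P_f-(1+\omega)\tfrac{h'}{h}+\tfrac{\omega'\overline\omega}{1-|\omega|^2}$, so (using $|\omega|<1$ and $(1+|\omega|)|z\,h'/h|=|z\,h'/h|+|z\,g'/g|$, since $g'/g=\omega h'/h$)
$$\Big|z\Big(\tfrac{h''}{h'}-\tfrac{h'}{h}\Big)(z)\Big|\ \le\ |zP_f(z)|+(1+|\omega|)\Big|z\tfrac{h'}{h}(z)\Big|+\frac{|z\omega'(z)|}{1-|\omega(z)|^2}\ =\ \Delta(z),$$
and likewise $|zP_f(z)|\le\Delta(z)$, $|z(g'/g)(z)|\le\Delta(z)$, etc. --- that is, \emph{the three terms of} (\ref{univ}) \emph{are exactly the triangle-inequality envelope of the quantities that need to be controlled}. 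With $|v(\zeta)|\le\Delta(\zeta)\le 1/(1-|\zeta|^2)=1/(1-e^{-2t}|z|^2)$ in hand, the classical Becker estimate
$$(1-e^{-2t})\,|v(\zeta)|\ \le\ \frac{1-e^{-2t}}{1-e^{-2t}|z|^2}\ <\ 1\qquad(z\in\D),$$
valid because $e^{-2t}|z|^2<e^{-2t}$, produces every strict inequality required, and the subordination principle finishes the proof. (Dropping the middle term $(1+|\omega|)|z\,h'/h|$ would leave the local univalence of $h(\cdot,t)$ uncontrolled, which is why it is present.)

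For the sharpness of the constant $1$ I would exhibit, for each $c>1$, a non-univalent logharmonic mapping with $\Delta(z)\le c/(1-|z|^2)$ on $\D$. Taking $\omega\equiv 0$, the mapping is analytic and (\ref{univ}) reduces to $|zP_h(z)|+|z\,h'/h(z)|\le 1/(1-|z|^2)$; here a shifted Becker--Pommerenke extremal works: take $h_0'(z)=\big((1+z)/(1-z)\big)^{(c-\varepsilon)/2}$, so that $(1-|z|^2)|zP_{h_0}(z)|\le c-\varepsilon$ with equality approached as $z\to1$, and set $h=h_0+C$ with $C$ so large that $h$ is non-vanishing and $(1-|z|^2)|z\,h'/h|\le\varepsilon$ on $\D$; for $c-\varepsilon>1$ this $h$ is not univalent \cite{BP}, while $\Delta\le c/(1-|z|^2)$. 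Non-constant dilatations are covered by the corresponding powers (cf. the Example). The main obstacle is precisely this positivity step: unlike the classical Becker inequality it must absorb the pre-Schwarzian, the logarithmic-derivative and the dilatation contributions simultaneously, which is possible only because $\Delta$ has exactly the three terms above; organizing the chain and its Herglotz data so that nothing larger than $\Delta$ ever appears is the delicate part.
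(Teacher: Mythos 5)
Your triangle--inequality bookkeeping is correct and is in substance the same estimate the paper uses, but the engine you attach it to is not there. The paper's proof is a short reduction to a known result: since $f\neq 0$, the mapping $F=\log f=\log h+\overline{\log g}$ is a sense-preserving harmonic mapping on $\D$ with the same dilatation $\omega$ and with $P_F=\frac{h''}{h'}-\frac{h'}{h}-\frac{\omega'\overline{\omega}}{1-|\omega|^2}=P_f-(1+\omega)\frac{h'}{h}$; hypothesis (\ref{univ}) then gives $|zP_F|+\frac{|z\omega'|}{1-|\omega|^2}\leq\frac{1}{1-|z|^2}$, so the harmonic Becker criterion \cite[Thm.~8]{RH-MJ} makes $F$ univalent, whence $f(z_1)=f(z_2)$ forces $z_1=z_2$, and sharpness is inherited from the harmonic case. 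You instead propose to run Becker's Loewner-chain argument directly on the logharmonic map, and the decisive step --- ``the subordination principle appropriate to this class, the logharmonic analogue of the one used in \cite{VBRHOV1}'' --- is only asserted. No chain/subordination univalence principle for logharmonic mappings exists in the literature you cite (\cite{VBRHOV1} and \cite{RH-MJ} concern harmonic mappings), and establishing it --- together with the existence and properties of your second factor $g(z,t)$, the verification that $f(\cdot,t)$ is genuinely an expanding family of univalent logharmonic maps, and the derivation (not just the listing) of which ``Herglotz'' quantities must have positive real part --- is essentially the whole content of the theorem. So the proposal has a genuine gap at its core; the repair is precisely the paper's reduction through $F=\log f$, which your own identity $\frac{h''}{h'}-\frac{h'}{h}=P_f-(1+\omega)\frac{h'}{h}+\frac{\omega'\overline{\omega}}{1-|\omega|^2}$ already puts within reach.

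The sharpness argument also fails as written. For $h_0'(z)=\left(\frac{1+z}{1-z}\right)^{(c-\varepsilon)/2}$ with $c-\varepsilon$ slightly larger than $1$, the exponent is slightly larger than $1/2$, so $|\arg h_0'(z)|<\frac{\pi}{2}\cdot\frac{c-\varepsilon}{2}<\frac{\pi}{2}$ and $\mathrm{Re}\,h_0'>0$ on $\D$; by the Noshiro--Warschawski theorem $h_0$ is univalent, so your claimed non-univalent extremal is in fact univalent, and \cite{BP} does not assert non-univalence of such power-type functions (their extremal construction is considerably more delicate). Consequently the constant-shift trick has nothing to apply to. The paper sidesteps all of this: the constant $1$ is sharp for the harmonic Becker criterion, and since the logharmonic condition (\ref{univ}) passes to the harmonic mapping $F=\log f$, sharpness transfers without constructing any new example.
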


\begin{proof} Let $z_1$ and $z_2$ in $\D$ such that $f(z_1)=f(z_2)$. Since $f\neq 0$ in the unit disc, then $F=\log(f)=\log h+\log \overline g=\log h+\overline{\log g}$ is a well-defined harmonic mapping in $\D$, such that $F(z_1)=F(z_2)$. However $F=H+\overline G$ satisfy that $\omega_F=G'/H'=\omega$ and $$P_F=\dfrac{H''}{H'}-\dfrac{\omega_F'\overline \omega_F}{1-|\omega_F|^2}=\dfrac{h''}{h'}-\dfrac{h'}{h}-\dfrac{\omega'\overline \omega}{1-|\omega|^2}=P_f-(1-\omega)\dfrac{h'}{h}.$$ Therefore, $$|zP_F|+\frac{|z\omega'|}{(1-|\omega|
^2)}\leq|zP_f|+(1+|\omega|)\left|z\dfrac{h'}{h}\right|+\dfrac{|z\omega'|}{1-|\omega|^2}.$$ Thus, by equation (\ref{univ}) and Becker's criterion of harmonic mapping (\cite[thm. 8]{RH-MJ}) $F$ is univalent, then $z_1=z_2$ which implies that $f$ is univalent. Since the constant 1 is sharp for harmonic functions, it follows that this constant is sharp for the logharmonic mappings, too.
\end{proof}

Moreover, the function $f_\lambda=h\overline{g^\lambda}$ satisfies equation (\ref{univ}) with $|\lambda|=1$ since the analytic part is the same and $\omega_{f_\lambda}=\lambda\omega_f$, therefore $f_\lambda$ is univalent in the unit disc for all $\lambda$ in the unit circle, which means that $f$ is a sense-preserving stable logharmonic univalent mapping.

\begin{thm}Let $f=h\overline g:\D\to\C$ be a nonvanishing logharmonic sense-preserving mapping with dilatation $\omega$ and $c\in\C$ such that $|c|\leq 1$ with $c\neq -1$. If  \begin{equation}\label{univ2}\left|(1-|z|^2)zP_f(z)+c|z|^2\right|+(1+|\omega|)\left|(1-|z|^2)z\dfrac{h'}{h}(z)\right|+\dfrac{|z\omega'(z)|(1-|z|^2)}{1-|\omega(z)|^2}\leq1,\end{equation} then $f$ is univalent. The constant 1 is sharp.
\end{thm}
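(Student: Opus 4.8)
The plan is to reproduce, almost verbatim, the argument used for the preceding theorem: one reduces the univalence of the logharmonic map $f$ to the univalence of the harmonic map $F=\log f$, and then quotes the harmonic analogue of Ahlfors' univalence criterion (available, together with the harmonic Becker criterion, in \cite{VBRHOV1} and \cite{RH-MJ}) in place of the harmonic Becker criterion.

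First I would take $z_1,z_2\in\D$ with $f(z_1)=f(z_2)$. Since $f$ is nonvanishing on $\D$, the branch $F=\log f=\log h+\overline{\log g}=H+\overline G$ is a well-defined sense-preserving harmonic mapping with $F(z_1)=F(z_2)$; here $H'=h'/h$ and $G'=g'/g$, so $\omega_F=G'/H'=\omega$, and, exactly as in the proof of the preceding theorem, $P_F=P_f-(1+\omega)\tfrac{h'}{h}$. Consequently
\[
(1-|z|^2)zP_F(z)+c|z|^2=\big[(1-|z|^2)zP_f(z)+c|z|^2\big]-(1+\omega)(1-|z|^2)z\tfrac{h'}{h}(z),
\]
so, adding $\tfrac{(1-|z|^2)|z\omega_F'(z)|}{1-|\omega_F(z)|^2}=\tfrac{(1-|z|^2)|z\omega'(z)|}{1-|\omega(z)|^2}$ to both sides and using the triangle inequality together with $|1+\omega|\le 1+|\omega|$,
\[
\big|(1-|z|^2)zP_F(z)+c|z|^2\big|+\frac{(1-|z|^2)|z\omega_F'(z)|}{1-|\omega_F(z)|^2}\le\big|(1-|z|^2)zP_f(z)+c|z|^2\big|+(1+|\omega|)\big|(1-|z|^2)z\tfrac{h'}{h}(z)\big|+\frac{(1-|z|^2)|z\omega'(z)|}{1-|\omega(z)|^2}\le1,
\]
the last inequality by (\ref{univ2}). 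Since $|c|\le1$ and $c\neq-1$, the harmonic version of Ahlfors' criterion now applies to $F$, whence $F$ is univalent on $\D$; therefore $z_1=z_2$ and $f$ is univalent.

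For the sharpness of the constant $1$ I would argue as at the end of the preceding theorem: the constant is already known to be sharp for the harmonic Ahlfors criterion, and an extremal harmonic mapping $F_0=H_0+\overline{G_0}$ exponentiates to the nonvanishing logharmonic mapping $f_0=e^{H_0}\,\overline{e^{G_0}}$, with analytic part $h_0=e^{H_0}$ and dilatation $\omega_{f_0}=\omega_{F_0}$, which inherits the failure of univalence once the constant is enlarged; hence $1$ cannot be improved. The whole argument has no genuinely difficult step, being parallel to the preceding theorem; the two points that demand care are (i) invoking the harmonic Ahlfors-type criterion in exactly the form $\big|(1-|z|^2)zP_F+c|z|^2\big|+(1-|z|^2)|z\omega_F'|/(1-|\omega_F|^2)\le1$ under the hypotheses $|c|\le1$, $c\neq-1$ (the value $c=-1$ must be excluded precisely as in the analytic statement), and (ii) for the sharpness transfer, checking that the harmonic extremal may be chosen of the form $\log f_0$ so that the extra term $(1+|\omega|)\big|(1-|z|^2)z\,h'/h\big|$ in (\ref{univ2}) does not spoil the bound in the extremal configuration.
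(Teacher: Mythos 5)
Your proof is correct and follows essentially the same route as the paper: pass to the harmonic mapping $F=\log f=H+\overline G$, use $\omega_F=\omega$ and $P_F=P_f-(1+\omega)\frac{h'}{h}$ (which is the correct identity, as in the paper's proof of this theorem), verify the hypothesis of the harmonic Ahlfors-type criterion (Proposition 1 of \cite{VBRHOV1}) by the triangle inequality, and conclude univalence of $F$ and hence of $f$. Your explicit triangle-inequality computation and the sharpness transfer merely spell out steps the paper leaves implicit.
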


\begin{proof} Since $f$ is a nonvanishing logharmonic mapping, it follows that $F=\log f$ is an harmonic mapping in the unit disc, such that $P_F=P_f-(1+\omega)h'/h$. Thus $F$ satisfies the hypothesis of Proposition 1 in \cite{VBRHOV1}, then $F$ is univalent harmonic mapping, therefore $f$ is univalent logharmonic function.
\end{proof}

\begin{obs} If $|P_f(z)|(1-|z|^2)+|\omega'|(1-|z|^2)/(1-|\omega|^2)\leq 1$ then the analytic function $\varphi$, defined such that $\varphi'=h'g$ satisfies that $|P_\varphi(z)|(1-|z|^2)\leq 1$, then $h'g$ is a univalent mapping in the unit disc. Using Proposition 3 one can conclude that $\varphi_\varepsilon$ is univalent too  when $\varphi_\varepsilon'=h'g+\overline \varepsilon g'h$. Taking $a$ in the line Re$\{a\}=1/2$ and $b=1-a$ we have $|\varepsilon|=1$, thus for $\varepsilon=1$ we obtain that $hg$ is an analytic univalent mapping, in fact, this argument shows that $\varphi_\varepsilon$ is a stable analytic univalent mapping, as a consequence $$\int_0^zh'(\zeta)g(\zeta)d\zeta+\overline{\int_0^zh(\zeta)g'(\zeta)d\zeta}\,,$$ is a univalent harmonic mapping in the unit disc where its dilatation is the dilatation of $f$.
\end{obs}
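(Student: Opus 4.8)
The plan is to reduce the whole statement to Becker's analytic criterion (recalled in the Introduction) together with the Clunie--Sheil-Small stability equivalence recalled at the start of this section. First I would record the pre-Schwarzian of the auxiliary analytic primitive. Writing $\varphi'=h'g$ and using $g'/g=\omega\,h'/h$, a direct differentiation gives
\begin{equation*}
P_\varphi=\frac{(h'g)'}{h'g}=\frac{h''}{h'}+\frac{g'}{g}=P_f+\frac{\omega'\overline\omega}{1-|\omega|^2}.
\end{equation*}
Since $|\omega|<1$, the triangle inequality yields $|P_\varphi|(1-|z|^2)\le |P_f|(1-|z|^2)+|\omega'|(1-|z|^2)/(1-|\omega|^2)\le 1$ by hypothesis; and as $|z|<1$ this also controls $|zP_\varphi|(1-|z|^2)$, so Becker's criterion makes $\varphi$ univalent, i.e.\ $\int_0^z h'g$ is univalent. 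This is exactly the case $\varepsilon=0$ of the family considered next.

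Next I would treat $\varphi_\varepsilon$ with $\varphi_\varepsilon'=h'g+\overline\varepsilon\,hg'=\varphi'+\overline\varepsilon\,\psi'$, where $\psi'=hg'$, for every $\varepsilon$ with $|\varepsilon|<1$. Given such an $\varepsilon$, I would choose $a=(1-\varepsilon)/(1-|\varepsilon|^2)$ and $b=1-a$, so that $a\neq0$, $a+b=1$, and $\varepsilon=b/\overline a$. Forming $L(z)=z^a\overline{z^b}$ and $L\circ f=H\overline G$ as in Proposition 3, with $H=h^ag^{\overline b}$, $G=g^{\overline a}h^b$ and dilatation $W=(\overline a/a)(\omega+\varepsilon)/(1+\overline\varepsilon\omega)$, the condition $a+b=1$ collapses the exponents and one computes $H'G=a(h'g+\overline\varepsilon\,hg')=a\,\varphi_\varepsilon'$. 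Hence $\Phi:=\int H'G$ is an affine image of $\varphi_\varepsilon$, so by affine invariance $P_{\varphi_\varepsilon}=P_\Phi=H''/H'+G'/G=P_{L\circ f}+W'\overline W/(1-|W|^2)$. Proposition 3 gives $P_{L\circ f}=P_f$, and the key point will be that $W$ is a unimodular rotation of a disk automorphism of $\omega$ (legitimate because $|\varepsilon|<1$), whence the hyperbolic-type factor is preserved: $|W'|/(1-|W|^2)=|\omega'|/(1-|\omega|^2)$. Therefore $|P_{\varphi_\varepsilon}|(1-|z|^2)\le |P_f|(1-|z|^2)+|\omega'|(1-|z|^2)/(1-|\omega|^2)\le1$, and Becker's criterion makes $\varphi_\varepsilon$ univalent for all $|\varepsilon|<1$.

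To reach the unit circle I would let $\varepsilon\to\varepsilon_0$ with $|\varepsilon_0|=1$ along radii: each $\varphi_\varepsilon$ is univalent, $\varphi_\varepsilon\to\varphi_{\varepsilon_0}$ locally uniformly, and $\varphi_{\varepsilon_0}'=h'g\,(1+\overline{\varepsilon_0}\omega)\neq0$ keeps the limit non-constant, so $\varphi_{\varepsilon_0}$ is univalent as well. Thus $\varphi_\varepsilon=\varphi+\overline\varepsilon\,\psi$ is univalent for every $|\varepsilon|=1$, which is exactly the statement that $\varphi+\psi$ is a stable analytic univalent mapping; the endpoint $\varepsilon=1$ (also produced directly by a critical-line choice $\mathrm{Re}\{a\}=1/2$, $b=1-a$, for which $\varepsilon=b/\overline a=1$) gives $\varphi_1=hg$ univalent. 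Finally, by the Clunie--Sheil-Small equivalence recalled above, stability of $\varphi+\psi$ is the same as stability, hence univalence, of the harmonic mapping $\varphi+\overline\psi=\int_0^z h'g\,d\zeta+\overline{\int_0^z hg'\,d\zeta}$, whose dilatation is $\psi'/\varphi'=hg'/(h'g)=\omega$, the dilatation of $f$.

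The main obstacle is the middle step: matching $H'G$ with $a\,\varphi_\varepsilon'$ through the exponent cancellation forced by $a+b=1$, and recognizing that the factor $|W'|/(1-|W|^2)$ is invariant under $\omega\mapsto W$. This invariance is precisely the Schwarz--Pick equality for disk automorphisms, and it is what allows the single hypothesis on $P_f$ and $\omega$ to survive the passage from $f$ to every $L\circ f$; once it is in place, Becker's criterion and the stability equivalence do the rest. The only other delicate point is the degeneration $|W|=1$ occurring at $|\varepsilon|=1$, which I would circumvent by proving univalence for $|\varepsilon|<1$ and passing to the boundary through the normal-family limit described above.
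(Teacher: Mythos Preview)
Your argument is correct and follows the route sketched in the paper's observation: compute $P_\varphi=P_f+\omega'\overline\omega/(1-|\omega|^2)$, pass to $\varphi_\varepsilon$ through the composition $L\circ f$ of Proposition~3 (so that $H'G=a\,\varphi_\varepsilon'$ when $a+b=1$), bound via Becker, and conclude with the Clunie--Sheil-Small stability equivalence. Your Schwarz--Pick identity $|W'|/(1-|W|^2)=|\omega'|/(1-|\omega|^2)$ is exactly the right mechanism, and your limiting passage to $|\varepsilon|=1$ is in fact more careful than the paper, which jumps directly to $\mathrm{Re}\{a\}=1/2$; note, incidentally, that the limit can be avoided since the explicit formula $P_{\varphi_\varepsilon}=P_f+\omega'(\overline\omega+\overline\varepsilon)/[(1-|\omega|^2)(1+\overline\varepsilon\omega)]$ already gives $|P_{\varphi_\varepsilon}|\le |P_f|+|\omega'|/(1-|\omega|^2)$ for every $|\varepsilon|\le 1$.
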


\end{document}